\documentclass[12pt]{article}
\usepackage[utf8]{inputenc}
\usepackage{algorithm}
\usepackage{algpseudocode}
\usepackage{comment}
\usepackage{amsmath}
\usepackage{amsfonts}
\usepackage{amssymb}
\usepackage{amsbsy}
\usepackage{amsthm}
\usepackage{bm}
\usepackage{xcolor}
\usepackage{setspace}
\usepackage{float}
\usepackage[a4paper, total={7in, 8in}]{geometry}
\usepackage{cite} % Added for proper citation handling

\newcommand{\spa}[1]{\text{{$\mathcal{#1}$}}}

\newcommand{\des}{\ensuremath{\bm{x}}}

\newcommand{\vect}[1]{\ensuremath{\bm{#1}}}

\newcommand{\desCap}{\ensuremath{\bm{X}}}

\newtheorem{lemma}{Lemma}

\title{A Proof of the Convergence a Lipschitz Upper Confidence Bound}
\author{Gregory Keslin}
\date{}

\begin{document}
\maketitle

\section{Plausible Optima Overview}
We consider the setting in which the replications from a stochastic simulation system, indexed by a covariate $\vect{x} \in \spa{X}$, follow a normal distribution, $Y_j(\vect{x})\sim N(\mu(\vect{x}),\sigma^2(\vect{x}))$. The functions $\mu(\cdot)$ and $\sigma^2(\cdot)$ map the mean and variance of the replications for each covariate. In such a setting, two natural problems that can arise are screening and feasibility. The screening problem is identification of the covariates which maximize $\mu(\cdot)$. More specifically, in the screening problem, we define the set of acceptable solutions to be,
$$\mathcal{A}=\{x \in \spa{X}| \mu(\vect{x})=\max_{\vect{x}^\prime \in \spa{X}}\mu(\vect{x}^\prime)\}.$$
In the feasibility problem, we define the set of acceptable solutions to be,
$$\mathcal{A}=\{x \in \spa{X}|\mu(\vect{x})\geq c\},$$
for some constant $c$. In either problem, if $\spa{X}$ is large, it may be unfeasible to sample replications from all the covariates a sufficiently large amount of times. Therefore, we wish to use the replications of other covariates when assessing the acceptability of a given covariate. If $\mu$ is assumed to be Lipschitz continuous, the Lipschitz constant, $\lambda^\star$, can be used to help rule out unacceptable solutions.

Let $M_{\lambda}$ be the set of all Lipschitz continuous functions with Lipschitz constant $\lambda$. Then, if the Lipschitz constant of $\mu$, $\lambda^\star$, is known, we can assess whether any $\vect{x} \in \spa{X}$ is feasible by only considering the feasibility of $\vect{x}$ for functions in $M_{\lambda}$. Define $G_{\lambda^\star}(\vect{x})=\{\phi \in M_{\lambda^\star}|\phi(\vect{x})\geq c\}$ to be the set of functions for which $\vect{x}$ is feasible. To determine the feasibility of $\vect{x}$, a design of covariates (possibly random), $\mathcal{D}_k=\{\desCap_1,\ldots,\desCap_k\}$, is chosen. Then, for each $\desCap_i \in \mathcal{D}_k$, $n(k,\desCap_i)$ number of replications are generated from a normal distribution with mean $\mu(\desCap_i)$ and variance $\sigma^2(\desCap_i)$. Then, for any $\phi \in M_{\lambda^\star}$, if the discrepancy between the replications and $\phi$ is sufficiently large, the function $\phi$ is considered ``implausible''. If all functions $\phi \in G_{\lambda^\star}(\vect{x})$ are considered implausible, then it is unlikely that $\vect{x}$ is feasible. Therefore, $\vect{x}$ is classified as unfeasible. 

In this document, we choose the discrepancy measure to be the standardized absolute value differences between $\phi$ and the sample means of the replications. Let $\hat{\mu}=\{\hat{\mu}_1,\ldots,\hat{\mu}_k\}$ be the set of sample means of each design covariate and $\hat{\sigma}=\{\hat{\sigma}_1,\ldots,\hat{\sigma}_k\}$ be the set of sample standard deviations of each design covariate. We define the standardized discrepancy metric as,
$$d^1(\phi,\hat{\mu},\hat{\sigma})=\sum_{i=1}^k\frac{\sqrt{n(k,\des_i)}|\phi(\des_i)-\hat{\mu}_i|}{\hat{\sigma}_i}.$$
For a given discrepancy cutoff, $D_\alpha$, any $\vect{x} \in \spa{X}$ is classified as unfeasible if,
$$\inf_{\phi \in G_{\lambda^\star}(\vect{x})} d^1(\phi,\hat{\mu},\hat{\sigma})>D_\alpha.$$
Controlling the probability of incorrectly classifying $\vect{x}$ as unfeasible if $\vect{x} \in \mathcal{A}$ requires choosing $D_\alpha$ such that,
$$\sup_{\vect{x} \in \mathcal{A}} \mathbb{P}\left (\inf_{\phi \in G_{\lambda^\star}(\vect{x})} d^1(\phi,\hat{\mu},\hat{\sigma})>D_\alpha  \right )\leq \alpha.$$
However, the value on the left-hand side of the above equality requires knowledge of $\mu$ which is unknown in practice. In contrast, for any $\mu$, $d^1(\mu,\hat{\mu},\hat{\sigma})$ follows a known distribution: the sum of the absolute value of $k$ independent $t$ distributions with degrees of freedom $n(k,\des_i)-1$. Therefore, if $D_\alpha$ is the $1-\alpha$ quantile of $d^1(\mu,\hat{\mu},\hat{\sigma})$,
\begin{equation*}
    \begin{split}
        \sup_{\vect{x} \in \mathcal{A}} \mathbb{P}\left (\inf_{\phi \in G_{\lambda^\star}(\vect{x})} d^1(\phi,\hat{\mu},\hat{\sigma})>D_\alpha  \right )&\leq \sup_{\vect{x} \in \mathcal{A}} \mathbb{P}\left (d^1(\mu,\hat{\mu},\hat{\sigma})>D_\alpha  \right )\\
        & = \mathbb{P}\left (d^1(\mu,\hat{\mu},\hat{\sigma})>D_\alpha  \right ) \\
        & = \alpha.
    \end{split}
\end{equation*}
However, depending on $\mu$, $\inf_{\phi \in G_{\lambda^\star}(\vect{x})} d^1(\phi,\hat{\mu},\hat{\sigma})$ may be stochastically smaller than $d^1(\mu,\hat{\mu},\hat{\sigma})$. In such a case, $D_\alpha$ is conservative and leads to too few Type 1 errors.

\section{Lipschitz Estimation Proofs}
In the prior section, the Plausible Optima problem was discussed when the Lipschitz constant of $\mu$ is known. However, it is rare that the Lipschitz constant is known. Therefore, in such settings, it must be estimated. 

If not all covariates have been sampled, it is not possible to estimate an upper bound of the Lipschitz constant since the Lipschitz constant is a supremum across the entire covariate space. However, it is feasible to construct a $1-\alpha$ lower confidence bound of the Lipschitz constant. This document proves the consistency of this lower confidence bound. Because the lower confidence bound is consistent, it may make sense to use it as a point estimator of the Lipschitz constant. By increasing $\alpha$, the lower confidence bound can be made to overestimate the Lipschitz constant with higher probability. Therefore, in some sense, for large $\alpha$, we can upper bound the Lipschitz constant. 

Define $n(k,\vect{x})$ to be the function which maps the number of replications generated from $\vect{x}_i$ for design size $k$. We assume $n(k,\vect{x})$ is larger than 3 for all $k$ and $\vect{x}$. Fix $u\in \mathbb{R}^+$. Let $\overline{M}_\lambda=\bigcup_{0\leq \lambda^\prime \leq \lambda }M_\lambda$. Define $\overline{M}^u_\lambda=\{\phi \in \overline{M}_\lambda|\sup_{\vect{x}\in \spa{X}}|\phi(\vect{x})|\leq u\}$. Set $d^1(\phi,\hat{\mu},\hat{\sigma}^2)=\sum_{i=1}^k\sqrt{n(k,\des_i)}|\hat{\mu}_i-\phi(\des_i)|/\hat{\sigma}_i$.

We are interested in a $1-\alpha$ lower confidence bound of the Lipschitz constant of $\mu$, $\lambda^\star$. We define this confidence bound as the inversion of a hypothesis test,
$$\hat{\lambda}_\alpha=\inf_{\lambda} \text{ s.t } \inf_{\phi \in \overline{M}_\lambda} d^1(\phi,\hat{\mu},\hat{\sigma}^2) \leq D_\alpha.$$
By definition,
$$\mathbb{P}\left (\lambda^\star>\hat{\lambda}_\alpha \right )\geq 1-\alpha.$$
We wish to prove that as $k \to \infty$, for $\lambda<\lambda^\star$, the hypothesis test above has power $1$,
$$\lim_{k \to \infty} \mathbb{P}\left (\inf_{\phi \in \overline{M}_\lambda} d^1(\phi,\hat{\mu},\hat{\sigma}^2)>D_\alpha \right )=1.$$

To do this, we prove a series of lemmas. The first is a Uniform Law of Large Numbers for the smaller class of Lipschitz functions, $\overline{M}_\lambda^u$, when the number of replications generated at each covariate are bounded. We then extend the result to the class we are interested in, $\overline{M}_\lambda$, when the number of replications generated at each covariate are bounded. Finally, we drop the requirement that the number of replications generated at each covariate are bounded.

The following is a list of assumptions that we assume on the problem:

1. Distribution: Let $Y_j(\vect{x})$ be a replication generated from covariate $\vect{x}$. Then, $Y_j(\vect{x})\sim N(\mu(\vect{x}),\sigma^2(\vect{x}))$. 

2. Bounded Variances: $0<\underline{\sigma}\leq \sigma(\vect{x})\leq \overline{\sigma}$.

3. Uniform Monotone Lipschitz Continuity of $n(k,x)$: There exists $\lambda^\prime$ such that for all $k$, $\max_{\vect{x},\vect{x}^\prime}n(k,\vect{x})-n(k,\vect{x}^\prime)\leq \lambda^\prime s(\vect{x},\vect{x}^\prime)$ and for each $\vect{x}$, $n(\cdot,\vect{x})$ is a non-decreasing function.

4. Minimum Replication Count: For all $k$, $\vect{x} \in \spa{X}, n(k,\vect{x})>3$.

5. Compactness: $\spa{X}$ is compact.

6. Design Distribution: Each $\desCap_i$ is independently generated from probability measure $\pi$.

\begin{lemma}
\label{lemma: ulln}
 A Uniform Law of Large Numbers: Suppose $\sup_{k,\vect{x} \in \spa{X}} n(k,\vect{x}) \leq N$. For all $u$ and $\epsilon>0$,
 $$\lim_{k \to \infty}\mathbb{P}\left (\sup_{\phi \in \overline{M}^u_\lambda}\frac{|d^1(\phi,\hat{\mu},\hat{\sigma}^2)-\mathbb{E}\left ( d^1(\phi,\hat{\mu},\hat{\sigma}^2)\right )|}{k}>\epsilon\right )=0.$$
\end{lemma}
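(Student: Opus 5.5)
Write $d^1(\phi,\hat{\mu},\hat{\sigma}^2)=\sum_{i=1}^k f_\phi(Z_i)$, where $Z_i=(\desCap_i,\hat{\mu}_i,\hat{\sigma}_i,n_i)$ and $f_\phi(Z_i)=\sqrt{n_i}\,|\hat{\mu}_i-\phi(\desCap_i)|/\hat{\sigma}_i$. By Assumption 6 the design points are i.i.d. from $\pi$. Conditionally on $\desCap_i$, the pair $(\hat{\mu}_i,\hat{\sigma}_i)$ depends only on the $n(k,\desCap_i)$ normal replications at $\desCap_i$, so the $Z_i$ are independent across $i$. They are not identically distributed in $k$, however, since $n(k,\cdot)$ may change with $k$. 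The plan is a standard bracketing/covering uniform law of large numbers for a triangular array of independent summands, indexed by the class $\overline{M}^u_\lambda$.

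\textbf{Step 1 (finite net).} The class $\overline{M}^u_\lambda$ consists of $\lambda$-Lipschitz functions bounded by $u$ on the compact set $\spa{X}$ (Assumption 5). By Arzel\`a--Ascoli it is totally bounded in the sup norm. For each $\delta>0$ fix a finite $\delta$-net $\phi_1,\dots,\phi_{m(\delta)}$, with $m(\delta)$ independent of $k$. Since $\big||a-b|-|a-c|\big|\le|b-c|$, for $\|\phi-\phi_j\|_\infty\le\delta$ we get
$$\frac{|d^1(\phi)-d^1(\phi_j)|}{k}\le \delta\cdot\frac{1}{k}\sum_{i=1}^k \frac{\sqrt{N}}{\hat{\sigma}_i},$$
and the same bound holds for the difference of expectations, with $\mathbb{E}(1/\hat{\sigma}_i)$ in place of $1/\hat{\sigma}_i$. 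Given $\desCap_i$, $(n_i-1)\hat{\sigma}_i^2/\sigma^2(\desCap_i)\sim\chi^2_{n_i-1}$ with $n_i-1\ge 3$ (Assumption 4). Hence $\mathbb{E}(1/\hat{\sigma}_i^2)\le C/\underline{\sigma}^2$, with $C$ depending only on the minimum degrees of freedom, using Assumption 2. This gives a uniform second-moment bound on $1/\hat{\sigma}_i$. By Chebyshev, $\frac1k\sum_i 1/\hat{\sigma}_i$ is bounded in probability by a constant $B$. Choosing $\delta=\epsilon/(3\sqrt{N}B')$ makes the approximation error below $2\epsilon/3$ with probability tending to one.

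\textbf{Step 2 (pointwise WLLN on the net).} For each fixed $\phi_j$, the summands $f_{\phi_j}(Z_i)$ are independent with uniformly bounded second moments. Writing $\hat{\mu}_i=\mu(\desCap_i)+\sigma(\desCap_i)\bar{\varepsilon}_i$ gives
$$f_{\phi_j}(Z_i)\le \sqrt{N}\,\big(|\mu(\desCap_i)|+u+\overline{\sigma}|\bar{\varepsilon}_i|\big)/\hat{\sigma}_i .$$
Here $\mu$ is bounded, being Lipschitz on a compact set. The sample mean and sample variance of normal data are independent, and $1/\hat{\sigma}_i^2$ has finite mean because the degrees of freedom are at least $3$. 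Together these give $\sup_{i,k}\mathbb{E} f_{\phi_j}(Z_i)^2<\infty$. Chebyshev's inequality for the triangular array then gives $P\big(|d^1(\phi_j)-\mathbb{E}d^1(\phi_j)|/k>\epsilon/3\big)=O(1/k)$. A union bound over the finitely many $j$ still tends to zero.

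\textbf{Step 3 (combine).} For any $\phi$, pick the net element $\phi_j$ with $\|\phi-\phi_j\|_\infty\le\delta$. Then $|d^1(\phi)-\mathbb{E}d^1(\phi)|/k$ is at most the net deviation plus the two approximation terms. On the intersection of the high-probability events from Steps 1 and 2, this is below $\epsilon$ uniformly in $\phi$.

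\textbf{Main obstacle.} The delicate point is the moment control of $1/\hat{\sigma}_i$. Second moments require $n_i-1>2$, which is exactly what Assumption 4 ($n>3$) provides. The bound must also hold uniformly in $k$ despite $n(k,\cdot)$ varying. Using $3\le n_i-1\le N-1$, only finitely many chi-square distributions occur, which gives the uniformity. A secondary technicality is measurability of the supremum. It is handled by the net argument, since the supremum is controlled by finitely many measurable quantities.
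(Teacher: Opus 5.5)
Your proposal is correct and follows essentially the same route as the paper. Both arguments use the same steps:
\begin{itemize}
\item Arzel\`a--Ascoli gives a finite sup-norm net.
\item The reverse triangle inequality bounds the approximation error by $\delta$ times an average of $\sqrt{n_i}/\hat{\sigma}_i$, whose moments are controlled using $n>3$.
\item A pointwise triangular law of large numbers handles the finitely many net elements.
\item A union bound finishes.
\end{itemize}
The one difference is how you control $\frac{1}{k}\sum_i 1/\hat{\sigma}_i$: you use Chebyshev at each fixed $k$, while the paper invokes a triangular SLLN to get an event over $\sup_{k\ge k_0}$. Your version is all the in-probability conclusion requires and is, if anything, cleaner.
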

\begin{proof}
    By definition, $\overline{M}^u_\lambda$ is a set of equicontinuous uniformly bounded functions. Let $r(\cdot)$ be the metric induced by the sup norm of continuous functions, $r(\phi,\phi^\prime)=||\phi-\phi^\prime||_\infty=\sup_{x \in \spa{X}}|\phi(x)-\phi^\prime(x)|.$ Then, since $\spa{X}$ is compact, for the standard metric topology induced by $r(\cdot)$, by the Arzel\`a-Ascoli Theorem, $\overline{M}^u_\lambda$ is compact. 

    Notice that for all $\delta>0$, if $r(\phi,\phi^\prime)<\delta$, 
    \begin{equation*}
        \begin{split}
            \frac{\left |d^1(\phi,\hat{\mu},\hat{\sigma}^2)-d^1(\phi^\prime,\hat{\mu},\hat{\sigma}^2)\right |}{k}&=\left | \sum_{i=1}^k\frac{\sqrt{n(k,\desCap_i)}(|\hat{\mu}_i-\phi(\desCap_i)|-|\hat{\mu}_i-\phi^\prime(\desCap_i)|)}{k \hat{\sigma}_i} \right |\\
            & \leq \left | \sum_{i=1}^k\frac{\sqrt{n(k,\desCap_i)}(|\hat{\mu}_i-\phi(\desCap_i)-\hat{\mu}_i+\phi^\prime(\desCap_i)|)}{k \hat{\sigma}_i} \right |\\
            & \leq \delta \left | \sum_{i=1}^k\frac{\sqrt{n(k,\desCap_i)}}{k \hat{\sigma}_i} \right |\\
            & \leq \frac{N\delta}{\bar{\sigma}} \sum_{i=1}^k \frac{1}{k\chi_{i,n(k,\desCap_i)-1}},
        \end{split}
    \end{equation*}
    where $\chi_{i,n(k,\desCap_i)-1}^2$, when conditioned on $\desCap_i=\des_i$, is, for each $i$, an independent chi-squared random variable with $n(k,\des_i)-1$ degrees of freedom. Because $n(k,\des_i)>3$, $\mathbb{E}(1/\chi_{i,n(k,\desCap_i)-1}^2)=\mathbb{E}(1/(n(k,\desCap_i)-3))\leq 1$. Therefore, $\mathbb{E}(1/\chi_{i,n(k,\desCap_i)-1})\leq 2$. Thus, since $(1/\chi_{i,n(k,\desCap_i)-1})_{i=1}^k$ is an i.i.d sequence, by a triangular S.L.L.N., there exists $k_0$ such that,
    $$\mathbb{P}\left ( \sup_{k\geq k_0} \sum_{i=1}^k \frac{1}{k\chi_{i,n(k,\desCap_i)-1}} > 3\right )\leq \frac{\epsilon}{2}.$$
    Define the event $A=\{\sup_{k\geq k_0} \sum_{i=1}^k \frac{1}{k\chi_{i,n(k,\desCap_i)-1}} \leq 3\}$.
    
    (The remainder of the proof follows standard arguments for uniform strong laws of large numbers; see, e.g., Newey \cite{newey1991uniform}.)

    Since $\overline{M}^u_\lambda$ is compact, there exists a finite covering of $\overline{M}^u_\lambda$, $g_1,g_2,\ldots,g_t$ such that for all $\phi \in \overline{M}^u_\lambda$, there exists $g_j$ where $r(g_j,\phi)\leq \epsilon\underline{\sigma}/(9N)$. For each $g_j$, since $n(k,\des_i)\leq N$ almost surely and $||g_j||_\infty\leq u$, $\sup_k \mathbb{E}\left ((\sqrt{n(k,\des_i)}(\hat{\mu}_i-g_j(\des_i)))^2/\hat{\sigma}^2_i\right )<\infty$. Thus, by a triangular law of large numbers (see Durrett \cite{durrett2019probability}, for example), there exists $k_j$ such that for all $k>k_j$,
    $$\mathbb{P}\left (\frac{|d^1(g_j,\hat{\mu},\hat{\sigma}^2)-\mathbb{E}\left ( d^1(g_j,\hat{\mu},\hat{\sigma}^2)\right )|}{k}>\frac{\epsilon}{3}\right )<\frac{\epsilon}{2t}.$$
    
    Let $k^\star=\max_{j=1}^t k_j$. Consider $k\geq k^\star$. Fix arbitrary $\phi \in \overline{M}^u_\lambda$. Choose $g_j$ such that $r(g_j,\phi)\leq \epsilon/(9N)$. By the Triangle Inequality,
    \begin{equation*}
        \begin{split}
           \left |d^1(\phi,\hat{\mu},\hat{\sigma}^2)-\mathbb{E}\left ( d^1(\phi,\hat{\mu},\hat{\sigma}^2)\right )\right |&\leq \left |d^1(\phi,\hat{\mu},\hat{\sigma}^2)-d^1(g_j,\hat{\mu},\hat{\sigma}^2) \right | +\left | d^1(g_j,\hat{\mu},\hat{\sigma}^2)-\mathbb{E}\left ( d^1(g_j,\hat{\mu},\hat{\sigma}^2)\right ) \right |\\
           & +\left |\mathbb{E}\left ( d^1(g_j,\hat{\mu},\hat{\sigma}^2)\right )-\mathbb{E}\left ( d^1(\phi,\hat{\mu},\hat{\sigma}^2)\right ) \right |.
        \end{split}
    \end{equation*}
    From Jensen's inequality,
    \begin{equation*}
        \begin{split}
            \frac{1}{k}\left |\mathbb{E}\left ( d^1(g_j,\hat{\mu},\hat{\sigma}^2)\right )-\mathbb{E}\left ( d^1(\phi,\hat{\mu},\hat{\sigma}^2)\right ) \right |&\leq  \frac{1}{k}\mathbb{E}\left (\left | d^1(g_j,\hat{\mu},\hat{\sigma}^2)- d^1(\phi,\hat{\mu},\hat{\sigma}^2) \right |\right )\\
            &\leq \mathbb{E}\left ( \frac{N\epsilon\underline{\sigma}}{9N\underline{\sigma}}\sum_{i=1}^k\frac{1}{k\chi_{i,n(k,\des_i)-1}} \right )\\
            &\leq \frac{\epsilon}{3}.
        \end{split}
    \end{equation*}
    If event A occurs, 
    \begin{equation*}
        \begin{split}
             \frac{1}{k}\left | d^1(g_j,\hat{\mu},\hat{\sigma}^2)- d^1(\phi,\hat{\mu},\hat{\sigma}^2) \right |& \leq  \frac{N\epsilon\underline{\sigma}}{9N\underline{\sigma}}\sum_{i=1}^k\frac{1}{k\chi_{i,n(k,\des_i)-1}} \\
            & \leq \frac{\epsilon}{3}.
        \end{split}
    \end{equation*}
    Therefore, if event A occurs and $\left | d^1(g_j,\hat{\mu},\hat{\sigma}^2)-\mathbb{E}\left ( d^1(g_j,\hat{\mu},\hat{\sigma}^2)\right ) \right |/k\leq \epsilon/3$,
    \begin{equation*}
        \begin{split}
           \frac{1}{k}\left |d^1(\phi,\hat{\mu},\hat{\sigma}^2)-\mathbb{E}\left ( d^1(\phi,\hat{\mu},\hat{\sigma}^2)\right )\right |&\leq \frac{ \left |d^1(\phi,\hat{\mu},\hat{\sigma}^2)-d^1(g_j,\hat{\mu},\hat{\sigma}^2) \right |}{k} \\
           & +\frac{\left | d^1(g_j,\hat{\mu},\hat{\sigma}^2)-\mathbb{E}\left ( d^1(g_j,\hat{\mu},\hat{\sigma}^2)\right ) \right |}{k}\\
           & +\frac{\left |\mathbb{E}\left ( d^1(g_j,\hat{\mu},\hat{\sigma}^2)\right )-\mathbb{E}\left ( d^1(\phi,\hat{\mu},\hat{\sigma}^2)\right ) \right |}{k}\\
           &\leq \frac{\epsilon}{3}+\frac{\epsilon}{3}+\frac{\epsilon}{3}\\
           &= \epsilon.
        \end{split}
    \end{equation*}
    Thus, if event A occurs and $\max_{g_j}\left | d^1(g_j,\hat{\mu},\hat{\sigma}^2)-\mathbb{E}\left ( d^1(g_j,\hat{\mu},\hat{\sigma}^2)\right ) \right |/k\leq \epsilon/3$,
    $$\sup_{\phi \in \overline{M}^u_\lambda}\frac{1}{k}\left |d^1(\phi,\hat{\mu},\hat{\sigma}^2)-\mathbb{E}\left ( d^1(\phi,\hat{\mu},\hat{\sigma}^2)\right )\right | \leq \epsilon.$$
    Therefore, if $k\geq k^\star$, by the Union Bound,
    \begin{equation*}
        \begin{split}
            & \mathbb{P}\left (\sup_{\phi \in \overline{M}^u_\lambda}\frac{|d^1(\phi,\hat{\mu},\hat{\sigma}^2)-\mathbb{E}\left ( d^1(\phi,\hat{\mu},\hat{\sigma}^2)\right )|}{k}>\epsilon\right )\\
            &\leq \mathbb{P}\left ( A^C \bigcup \left \{\max_{g_j} \frac{\left | d^1(g_j,\hat{\mu},\hat{\sigma}^2)-\mathbb{E}\left ( d^1(g_j,\hat{\mu},\hat{\sigma}^2)\right ) \right |}{k}> \epsilon/3 \right \} \right )\\
            & \leq \mathbb{P}\left ( A^C\bigcup  \bigcup_{j=1}^t \left \{ \frac{\left | d^1(g_j,\hat{\mu},\hat{\sigma}^2)-\mathbb{E}\left ( d^1(g_j,\hat{\mu},\hat{\sigma}^2)\right ) \right |}{k}> \epsilon/3 \right \} \right )\\
            &\leq \mathbb{P}\left (A^C \right )+\sum_{j=1}^t \mathbb{P}\left ( \frac{\left | d^1(g_j,\hat{\mu},\hat{\sigma}^2)-\mathbb{E}\left ( d^1(g_j,\hat{\mu},\hat{\sigma}^2)\right ) \right |}{k}> \epsilon/3 \right )\\
            &=\frac{\epsilon}{2}+\sum_{j=1}^t \frac{\epsilon}{2t} \\
            &= \epsilon.
        \end{split}
    \end{equation*}
\end{proof}

\begin{lemma}
\label{lemma: deltaGapExp}
    Let $\mu(\cdot)$ be the true mean function and suppose $\inf_{\phi \in \overline{M}_\lambda^u} \int_{\vect{x} \in \spa{X}}|\phi(\vect{x})-\mu(\vect{x})|d\pi>0$. Then there exists $\delta>0$ such that for all $\phi \in \overline{M}_\lambda^u$,
    $$\mathbb{E}\left (  \frac{d^1(\phi,\hat{\mu},\hat{\sigma}^2)}{k}\right )\geq \delta+\mathbb{E}\left (  \frac{d^1(\mu,\hat{\mu},\hat{\sigma}^2)}{k}\right ).$$
\end{lemma}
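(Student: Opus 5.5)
We reduce the expectation to a sum of single-covariate terms, compute each term in closed form using normality, and then use a convexity/symmetry argument plus compactness of $\overline{M}^u_\lambda$ to get a uniform gap.

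\textbf{Step 1: reduce to one term.} Since the $\desCap_i$ are i.i.d.\ from $\pi$, each summand of $d^1$ has the same law given $k$. Hence
$$\mathbb{E}\big(d^1(\phi,\hat{\mu},\hat{\sigma}^2)/k\big)=\mathbb{E}\Big(\sqrt{n(k,\desCap)}\,|\hat{\mu}-\phi(\desCap)|/\hat{\sigma}\Big),$$
with $\desCap\sim\pi$ and $(\hat{\mu},\hat{\sigma})$ computed from $n=n(k,\desCap)$ normal replications.

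\textbf{Step 2: the conditional expectation.} Condition on $\desCap=\vect{x}$. Under Assumption 1, $\hat{\mu}$ and $\hat{\sigma}$ are independent. Write
$$\sqrt{n}(\hat{\mu}-\phi(\vect{x}))=\sigma(\vect{x})Z+\sqrt{n}\,(\mu(\vect{x})-\phi(\vect{x})), \qquad Z\sim N(0,1).$$
Then the conditional expectation equals $\mathbb{E}(\sigma/\hat{\sigma})\cdot h\big(\sqrt{n}\,a/\sigma\big)$, where $a=\mu(\vect{x})-\phi(\vect{x})$ and
$$h(t)=\mathbb{E}|Z+t|=t(2\Phi(t)-1)+2\varphi(t).$$
The function $h$ is even and convex, with $h'(t)=2\Phi(t)-1$ and $h''=2\varphi>0$. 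So $h(t)-h(0)$ is increasing in $|t|$ and strictly positive for $t\neq 0$. For example, $h(t)-h(0)\geq g(|t|)$ with $g(s)=\int_0^s(2\Phi(r)-1)\,dr$, which is continuous, increasing, and satisfies $g(s)\geq c\min(s,s^2)$. The factor $\mathbb{E}(\sigma/\hat{\sigma})=\mathbb{E}\big(\sqrt{n-1}/\chi_{n-1}\big)\geq 1$ by Jensen. Using Assumptions 2 and 4, together with $n\leq N$ as in Lemma~\ref{lemma: ulln} (or merely $n>3$), both this factor and $\sqrt{n}/\sigma\geq 2/\overline{\sigma}$ are bounded below.

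\textbf{Step 3: pass to the unconditional expectation.} Subtracting the $\phi=\mu$ case gives
$$\mathbb{E}\big(d^1(\phi)/k\big)-\mathbb{E}\big(d^1(\mu)/k\big)\geq \int g\big(2|\mu(\vect{x})-\phi(\vect{x})|/\overline{\sigma}\big)\,d\pi(\vect{x})=:F(\phi).$$
This bound is free of $k$. Positivity of $F(\phi)$ follows from the hypothesis: $\int|\phi-\mu|\,d\pi>0$ implies $|\phi-\mu|>0$ on a set of positive $\pi$-measure, and $g>0$ there.

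\textbf{Step 4: uniformity in $\phi$ (main obstacle).} We need $\delta=\inf_{\phi}F(\phi)>0$ rather than only pointwise positivity. By the Arzel\`a--Ascoli argument in Lemma~\ref{lemma: ulln}, $\overline{M}^u_\lambda$ is compact in $\|\cdot\|_\infty$. The map $F$ is continuous in sup norm because $g$ is Lipschitz (since $|g'|\leq 1$) and $\mu$ is bounded on compact $\spa{X}$. Hence $F$ attains its infimum at some $\phi_0$. The hypothesis applied to $\phi_0$ gives $F(\phi_0)>0$, and we take $\delta=F(\phi_0)$.

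Alternatively, use $g(s)\geq c\min(s,s^2)$ together with Jensen and $|\phi-\mu|\leq u+\|\mu\|_\infty$. This gives $F(\phi)\geq c'\big(\int|\phi-\mu|\,d\pi\big)^2$ directly in terms of the assumed positive infimum, avoiding compactness altogether.

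The delicate points are independence of $\hat{\mu}$ and $\hat{\sigma}$ (Assumption 1) and making the lower bounds uniform in $k$ via Assumptions 2 and 4.
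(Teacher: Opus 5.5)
Your proof is correct. Its core matches the paper's: condition on $\desCap_i$, factor the expectation using independence of $\hat{\mu}_i$ and $\hat{\sigma}_i$, and use that the folded-normal mean $\mathbb{E}|Z+t|$ increases in $|t|$. You differ in the bookkeeping and in how you get uniformity over $\phi$.

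The paper first uses the bound $u$ to extract one $\epsilon>0$ with $\mathbb{P}(|\phi(\desCap_i)-\mu(\desCap_i)|>\epsilon)>\epsilon$ for all $\phi\in\overline{M}^u_\lambda$. It then splits the expectation on that event. On the event it bounds the conditional gap below by a constant $c$, via the folded-normal formula and Gautschi's inequality. Off the event it bounds the gap below by $0$, via the mean-equals-median argument. This gives the explicit $\delta=c\epsilon$.

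You instead keep the scaling explicit through $h(\sqrt{n}\,a/\sigma)$ and replace Gautschi by the Jensen bound $\mathbb{E}(\sigma/\hat{\sigma})\geq 1$. You then integrate the pointwise bound into the $k$-free functional $F(\phi)$. Nonnegativity off the gap set is then automatic from $g\geq 0$, and uniformity in $k$ and $\vect{x}$ is transparent. The paper, by contrast, writes $\sigma_i$ both for the standard deviation of a replication and for that of $\hat{\mu}_i$, leaving the uniformity of its $c$ implicit.

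Your compactness route gives a non-explicit $\delta$. Your quadratic route gives $\delta$ of order $\eta^2$, where $\eta>0$ is the assumed infimum of $\int|\phi-\mu|\,d\pi$.

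Your setup actually allows a shorter finish than either. Since $g'=2\Phi-1$ is nondecreasing, $g$ is convex, so Jensen gives directly $F(\phi)\geq g\bigl(2\int|\phi-\mu|\,d\pi/\overline{\sigma}\bigr)\geq g(2\eta/\overline{\sigma})>0$. This uses neither compactness, nor the bound $u$, nor boundedness of $\mu$.
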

\begin{proof}
    Because $\overline{M}_\lambda^u$ is a set of functions uniformly bounded by $u$ and $\inf_{\phi \in \overline{M}_\lambda^u} \int_{\vect{x} \in \spa{X}}|\phi(\vect{x})-\mu(\vect{x})|d\pi>0$, there exists $\epsilon>0$ such that for all $\phi \in \overline{M}_\lambda^u$,
    $$\mathbb{P}\left ( |\mu(\desCap_i)-\phi(\desCap_i)|>\epsilon \right )>\epsilon.$$
    Fix some $\des_i \in \spa{X}$ such that $|\mu(\des_i)-\phi(\des_i)|>\epsilon$. Because $\hat{\sigma}_i$ is independent of $\hat{\mu}_i$ conditional on $\desCap_i=\des_i$,
    \begin{equation*}
        \begin{split}
           &\mathbb{E}\left ( \frac{\sqrt{n(k,\des_i)}(|\hat{\mu}_i-\phi(\des_i)|-|\hat{\mu}_i-\mu(\des_i)|)}{\hat{\sigma}_i}\Big\vert\desCap_i=\des_i \right )\\
           &\geq   \left (\mathbb{E}\left (|\hat{\mu}_i-\mu(\des_i)+\mu(\des_i)-\phi(\des_i)|\right )-\mathbb{E}\left (|\hat{\mu}_i-\mu(\des_i)|\right ) \right )\mathbb{E}\left (\frac{n(k,\des_i)-1}{\sigma_i\chi_{i,n(k,\des_i)-1}}\Big\vert\desCap_i=\des_i\right ).
        \end{split}
    \end{equation*}
    It can be shown (e.g., Leone et al. \cite{leone1961folded}) that for any $\Delta \in \mathbb{R}$,
    \begin{equation*}
        \begin{split}
             \mathbb{E}\left (|\hat{\mu}_i-\mu(\des_i)+\Delta|\right )=\sigma_i\sqrt{\frac{2}{\pi}}e^{-\Delta^2/(2\sigma_i^2)}+\Delta(1-2\Phi(-\frac{\Delta}{\sigma_i})).
        \end{split}
    \end{equation*}
    Taking the derivative of the equation above, it can be shown that it is a strictly increasing continuous function of $|\Delta|$. Therefore, there exists $c>0$ such that,
    $$\sigma_i\sqrt{\frac{2}{\pi}}e^{-\epsilon^2/(2\sigma_i^2)}+\epsilon(1-2\Phi(-\frac{\epsilon}{\sigma_i}))-\sigma_i\sqrt{\frac{2}{\pi}}\geq c\overline{\sigma}.$$
    Since $|\mu(\des_i)-\phi(\des_i)|>\epsilon$,
    $$\left (\mathbb{E}\left (|\hat{\mu}_i-\mu(\des_i)+\mu(\des_i)-\phi(\des_i)|\right )-\mathbb{E}\left (|\hat{\mu}_i-\mu(\des_i)|\right ) \right )\geq c\overline{\sigma}.$$
    Furthermore,
    $$\mathbb{E}\left (\frac{n(k,\des_i)-1}{\sigma_i\chi_{i,n(k,\des_i)-1}}\Big\vert\desCap_i=\des_i\right )=\frac{n(k,\des_i)-1}{\sigma_i\sqrt{2}}\frac{\Gamma(n(k,\des_i)/2-1)}{\Gamma(n(k,\des_i)/2-1/2)}.$$
    Since $n(k,\des_i)>3$, by Gautschi's Inequality \cite{gautschi1959some},
    $$\frac{n(k,\des_i)-1}{\sigma_i\sqrt{2}}\frac{\Gamma(n(k,\des_i)/2-1)}{\Gamma(n(k,\des_i)/2-1/2)}\geq \frac{1}{\overline{\sigma}}.$$
    Therefore,
    \begin{equation*}
        \begin{split}
           &\mathbb{E}\left ( \frac{\sqrt{n(k,\desCap_i)}(|\hat{\mu}_i-\phi(\des_i)|-|\hat{\mu}_i-\mu(\des_i)|)}{\hat{\sigma}_i}\Big\vert\desCap_i=\des_i \right ) \geq c.
        \end{split}
    \end{equation*}
    We now consider the unconditioned form of the expectation above,
    \begin{equation*}
        \begin{split}
            &\mathbb{E}\left ( \frac{\sqrt{n(k,\desCap_i)}(|\hat{\mu}_i-\phi(\des_i)|-|\hat{\mu}_i-\mu(\des_i)|)}{\hat{\sigma}_i} \right )\\
            &=\mathbb{E}\left ( \frac{\sqrt{n(k,\desCap_i)}(|\hat{\mu}_i-\phi(\des_i)|-|\hat{\mu}_i-\mu(\des_i)|)}{\hat{\sigma}_i}\Big \vert |\phi(\desCap_i)-\mu(\desCap_i)|\leq \epsilon \right )\mathbb{P}\left (|\phi(\desCap_i)-\mu(\desCap_i)|\leq \epsilon \right )\\
            &+\mathbb{E}\left ( \frac{\sqrt{n(k,\desCap_i)}(|\hat{\mu}_i-\phi(\des_i)|-|\hat{\mu}_i-\mu(\des_i)|)}{\hat{\sigma}_i} \Big \vert |\phi(\desCap_i)-\mu(\desCap_i)|>\epsilon \right )\mathbb{P}\left (|\phi(\desCap_i)-\mu(\desCap_i)|>\epsilon \right ).
        \end{split}
    \end{equation*}
    Because $\hat{\mu}_i$ is normally distributed, the mean and median are the same. Thus, for all $\des_i \in \spa{X}$, $\mathbb{E}\left (|\hat{\mu}_i-\phi(\des_i)|\right )>\mathbb{E}\left (|\hat{\mu}_i-\mu(\des_i)|\right )$. Therefore,
    $$\mathbb{E}\left ( \frac{\sqrt{n(k,\desCap_i)}(|\hat{\mu}_i-\phi(\des_i)|-|\hat{\mu}_i-\mu(\des_i)|)}{\hat{\sigma}_i}\Big \vert | \phi(\desCap_i)-\mu(\desCap_i)|\leq \epsilon \right )\mathbb{P}\left (|\phi(\desCap_i)-\mu(\desCap_i)|\leq \epsilon \right )\geq 0.$$
    Therefore, for all $\phi \in \overline{M}_\lambda^u$,
    \begin{equation*}
        \begin{split}
            &\mathbb{E}\left ( \frac{\sqrt{n(k,\desCap_i)}(|\hat{\mu}_i-\phi(\des_i)|-|\hat{\mu}_i-\mu(\des_i)|)}{\hat{\sigma}_i} \right )\\
            & \geq \mathbb{E}\left ( \frac{\sqrt{n(k,\desCap_i)}(|\hat{\mu}_i-\phi(\des_i)|-|\hat{\mu}_i-\mu(\des_i)|)}{\hat{\sigma}_i} \Big \vert |\phi(\desCap_i)-\mu(\desCap_i)|>\epsilon \right )\mathbb{P}\left (|\phi(\desCap_i)-\mu(\desCap_i)|>\epsilon \right ) \\
            & \geq c\cdot \mathbb{P}\left (|\phi(\desCap_i)-\mu(\desCap_i)|>\epsilon \right )\\
            & \geq c \epsilon.
        \end{split}
    \end{equation*}
    Setting $\delta=c\epsilon$ completes the proof. 
\end{proof}

\begin{lemma}
\label{lemma: unboundedFiniteN}
    Suppose $n(k,\vect{x})\leq N$ for all $k$, $\vect{x} \in \spa{X}$ and $\inf_{\phi \in \overline{M}_\lambda} \int_{\vect{x} \in \spa{X}}|\phi(\vect{x})-\mu(\vect{x})|d\pi>0$. Then there exists $\epsilon>0$ such that,
    $$\lim_{k \to \infty}\mathbb{P}\left (\inf_{\phi \in \overline{M}_\lambda} \frac{d^1(\phi,\hat{\mu},\hat{\sigma}^2)}{k}-\mathbb{E}\left (\frac{d^1(\mu,\hat{\mu},\hat{\sigma}^2)}{k}\right )<\epsilon\right )=0.$$
\end{lemma}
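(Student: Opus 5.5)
The plan is to reduce to the bounded class $\overline{M}^u_\lambda$, where Lemma~\ref{lemma: ulln} and Lemma~\ref{lemma: deltaGapExp} apply directly. The unbounded part of $\overline{M}_\lambda$ will be handled separately, using the fact that functions with large sup norm are far from most of the sample means.

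\textbf{Step 1 (choice of $u$).} Since $\mu$ is Lipschitz on the compact set $\spa{X}$, it is bounded, say $\|\mu\|_\infty\le m$. Fix $u$ large, to be chosen below, with $u > m + 2\lambda\,\mathrm{diam}(\spa{X}) + 1$. Any $\phi\in\overline{M}_\lambda$ with $\|\phi\|_\infty>u$ satisfies $|\phi(\vect{x})|> u-\lambda\,\mathrm{diam}(\spa{X})$ for every $\vect{x}$, because its oscillation is at most $\lambda\,\mathrm{diam}(\spa{X})$. Hence
$$|\phi(\vect{x})-\mu(\vect{x})|\ge u-m-\lambda\,\mathrm{diam}(\spa{X})=:L_u \quad\text{for all } \vect{x},$$
and $L_u\to\infty$ as $u\to\infty$.

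\textbf{Step 2 (unbounded part).} For such $\phi$, the triangle inequality gives
$$\frac{d^1(\phi,\hat\mu,\hat\sigma^2)}{k}\ \ge\ \frac{L_u}{k}\sum_{i=1}^k\frac{\sqrt{n(k,\desCap_i)}}{\hat\sigma_i}\ -\ \frac{d^1(\mu,\hat\mu,\hat\sigma^2)}{k}.$$
Since $n(k,\cdot)\le N$, the quantity $\mathbb{E}(d^1(\mu,\hat\mu,\hat\sigma^2)/k)$ is bounded above by a constant $C$ (a mean of absolute $t$ variables). The same triangular law of large numbers used in Lemma~\ref{lemma: ulln} then gives:
\begin{itemize}
\item $d^1(\mu,\hat\mu,\hat\sigma^2)/k$ stays within $1$ of its mean with probability tending to one;
\item $\frac1k\sum_i \sqrt{n(k,\desCap_i)}/\hat\sigma_i$ stays above a positive constant $b$ with probability tending to one. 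This uses $\sqrt{n}\ge 2$ and the fact that $1/\hat\sigma_i$ has a mean bounded below, e.g.\ by Jensen since $\mathbb{E}\hat\sigma_i\le\overline\sigma$.
\end{itemize}
Choosing $u$ so that $L_u b \ge 2C+2+1$, the infimum over the unbounded part exceeds $\mathbb{E}(d^1(\mu,\hat\mu,\hat\sigma^2)/k)+1$ with probability tending to one.

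\textbf{Step 3 (bounded part).} For this $u$, we have $\overline{M}^u_\lambda\subseteq\overline{M}_\lambda$, so the hypothesis implies $\inf_{\phi\in\overline{M}^u_\lambda}\int|\phi-\mu|\,d\pi>0$. Lemma~\ref{lemma: deltaGapExp} then yields $\delta>0$ such that $\mathbb{E}(d^1(\phi)/k)\ge\delta+\mathbb{E}(d^1(\mu)/k)$ uniformly over $\phi\in\overline{M}^u_\lambda$. Lemma~\ref{lemma: ulln}, applied with tolerance $\delta/2$, gives $\sup_{\phi\in \overline{M}^u_\lambda}|d^1(\phi)-\mathbb{E}d^1(\phi)|/k\le\delta/2$ with probability tending to one. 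On that event, $\inf_{\phi\in\overline{M}^u_\lambda} d^1(\phi)/k\ge \mathbb{E}(d^1(\mu)/k)+\delta/2$.

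\textbf{Step 4 (conclusion).} Set $\epsilon=\min(\delta/2,1)$ and intersect the two high-probability events; the probability of the stated event then tends to zero.

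The main obstacle is Step 2. One has to produce lower bounds that are uniform over the non-compact family, and these must hold via law-of-large-numbers statements that do not depend on $\phi$. The reduction to the sup-norm-free bound $|\phi-\mu|\ge L_u$ is what makes this possible. A secondary subtlety is that $\delta$ from Lemma~\ref{lemma: deltaGapExp} depends on $u$, so $u$ must be fixed first, as in Step 1, before $\delta$ is obtained.
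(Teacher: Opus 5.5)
Your proof is correct and takes essentially the paper's route. You split $\overline{M}_\lambda$ into $\overline{M}^u_\lambda$, handled by Lemma~\ref{lemma: deltaGapExp} together with Lemma~\ref{lemma: ulln}, and its complement, where every $\phi$ is uniformly far from $\mu$ so that a $\phi$-free lower bound plus a triangular law of large numbers suffices. The only real difference is how you get that $\phi$-free bound: you use the reverse triangle inequality, $L_u\frac{1}{k}\sum_i\sqrt{n(k,\desCap_i)}/\hat\sigma_i-d^1(\mu,\hat\mu,\hat\sigma^2)/k$, whereas the paper uses a sign-WLOG indicator bound with margin $\gamma\|\mu\|_\infty$; your version covers both signs of $\phi-\mu$ at once and does not degenerate when $\|\mu\|_\infty=0$.
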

\begin{proof}
    Fix $\delta>0$. Because $\spa{X}$ is compact, there exists $c>0$ such that $\max_{\vect{x},\vect{x}^\prime} s(\vect{x},\vect{x}^\prime)<c$. Let $\Delta=||\mu||_\infty$ and set $u=(1+\gamma)\Delta+\lambda c$ for some $\gamma>0$. Define $Q_\lambda^u=\overline{M}_\lambda \setminus \overline{M}_\lambda^u$. For any $\phi \in Q_\lambda^u$,
    \begin{equation*}
        \begin{split}
            \min_{\vect{x}}|\phi(\vect{x})-\mu(\vect{x})| & \geq \min_{\vect{x}} |\phi(\vect{x})|-\max_{\vect{x}} |\mu(\vect{x})|\\
            & \geq u-c\lambda - \Delta \\
            & = \gamma\Delta.
        \end{split}
    \end{equation*}
    Because $\phi$ is continuous, suppose WLOG that $\phi(\vect{x})-\mu(\vect{x})>\gamma\Delta$. Then, for all $\phi \in Q_\lambda^u$,
    $$\frac{d^1(\phi,\hat{\mu},\hat{\sigma}^2)}{k}\geq \sum_{i=1}^k \frac{\sqrt{n(k,\desCap_i)}(\gamma\Delta+\mu(\desCap_i)-\hat{\mu}_i)}{k\hat{\sigma}_i}I(\hat{\mu}_i<\mu(\desCap_i)+\gamma\Delta).$$
    There exists $\gamma$ large enough such that,
    $$\mathbb{E}\left ( \frac{\sqrt{n(k,\desCap_i)}(\gamma\Delta+\mu(\desCap_i)-\hat{\mu}_i)}{\hat{\sigma}_i}I(\hat{\mu}_i<\mu(\desCap_i)+\gamma\Delta) \right )>\mathbb{E}\left (\frac{d^1(\mu,\hat{\mu},\hat{\sigma}^2)}{k}\right )+2\delta.$$
    Since $\sup_{k} \mathbb{E}\left (n(k,\desCap_i)(\gamma\Delta+\mu(\desCap_i)-\hat{\mu}_i)^2/\hat{\sigma}^2_i \right )<\infty$, by a triangular law of large numbers,
    $$\lim_{k \to \infty} \mathbb{P} \left (\sum_{i=1}^k \frac{\sqrt{n(k,\desCap_i)}(\gamma\Delta+\mu(\desCap_i)-\hat{\mu}_i)}{k\hat{\sigma}_i}I(\hat{\mu}_i<\mu(\desCap_i)+\gamma\Delta)<\mathbb{E}\left (\frac{d^1(\mu,\hat{\mu},\hat{\sigma}^2)}{k}\right )+\delta\right )=0.$$
    Therefore,
    \begin{equation*}
        \begin{split}
            \lim_{k \to \infty} \mathbb{P} \left (\inf_{\phi \in Q_\lambda^u}\frac{d^1(\phi,\hat{\mu},\hat{\sigma}^2)}{k}<\mathbb{E}\left (\frac{d^1(\mu,\hat{\mu},\hat{\sigma}^2)}{k}\right )+\delta\right )=0.
        \end{split}
    \end{equation*}

    Since $\inf_{\phi \in \overline{M}_\lambda^u} \int_{\vect{x} \in \spa{X}}|\phi(\vect{x})-\mu(\vect{x})|d\pi>0$, by Lemma \ref{lemma: deltaGapExp}, there exists $\delta^\prime>0$ such that for all $\phi \in \overline{M}_\lambda^u$, 
    $$\mathbb{E}\left ( \frac{d^1(\phi,\hat{\mu},\hat{\sigma}^2)}{k}\right )\geq \delta^\prime+\mathbb{E}\left ( \frac{d^1(\mu,\hat{\mu},\hat{\sigma}^2)}{k}\right ).$$
    Therefore, since $n(k,\vect{x}_i)\leq N$ for all $k$, by Lemma \ref{lemma: ulln},
    \begin{equation*}
        \begin{split}
             &\lim_{k \to \infty} \mathbb{P} \left (\inf_{\phi \in \overline{M}_\lambda^u}\frac{d^1(\phi,\hat{\mu},\hat{\sigma}^2)}{k}<\mathbb{E}\left (\frac{d^1(\mu,\hat{\mu},\hat{\sigma}^2)}{k}\right )+\frac{\delta^\prime}{2}\right )\\
             &\leq \lim_{k \to \infty}\mathbb{P}\left (\sup_{\phi \in \overline{M}^u_\lambda}\frac{|d^1(\phi,\hat{\mu},\hat{\sigma}^2)-\mathbb{E}\left ( d^1(\phi,\hat{\mu},\hat{\sigma}^2)\right )|}{k}>\frac{\delta^\prime}{2}\right )\\
             & =0.
        \end{split}
    \end{equation*}
    Because $\overline{M}_\lambda=\overline{M}_\lambda^u \cup Q_\lambda^u$, for $\epsilon=\min(\delta,\delta^\prime/2)$,
    $$\lim_{k \to \infty}\mathbb{P}\left (\inf_{\phi \in \overline{M}_\lambda} \frac{d^1(\phi,\hat{\mu},\hat{\sigma}^2)}{k}-\mathbb{E}\left (\frac{d^1(\mu,\hat{\mu},\hat{\sigma}^2)}{k}\right )<\epsilon\right )=0.$$
\end{proof}

\begin{lemma}
\label{lemma:infNProb}
    Consider the case where there exists $\vect{x} \in \spa{X}$ such that $\lim_{k \to \infty}n(k,\vect{x}) =\infty$. Then, for all $|\delta|>0$, $\epsilon>0$,
    $$\lim_{k \to \infty} \mathbb{P}\left ( \frac{\sqrt{n(k,\desCap_i)}(|\hat{\mu}_i-\mu(\desCap_i)-\delta|-|\hat{\mu}_i-\mu(\desCap_i)|)}{\hat{\sigma}_i}\leq \epsilon \right )=0.$$
\end{lemma}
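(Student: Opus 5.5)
The plan is to reduce everything to one standard normal and one chi-squared variable, conditionally on $\desCap_i=\des_i$, and to show that the statistic diverges to $+\infty$ in probability uniformly in $\des_i$.

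\textbf{Step 1: uniform growth of $n$.} Let $\vect{x}_0$ be the covariate with $n(k,\vect{x}_0)\to\infty$. As in the proof of Lemma \ref{lemma: unboundedFiniteN}, compactness of $\spa{X}$ gives $c$ with $s(\vect{x},\vect{x}^\prime)<c$. Assumption 3, applied with the roles of $\vect{x}_0$ and $\vect{x}$ chosen appropriately, gives $n(k,\vect{x})\geq n(k,\vect{x}_0)-\lambda^\prime c$ for every $\vect{x}$. Hence
$$m_k:=\inf_{\vect{x}\in\spa{X}} n(k,\vect{x})\to\infty.$$
This is what makes the conclusion independent of where $\desCap_i$ lands.

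\textbf{Step 2: normalization.} Condition on $\desCap_i=\des_i$ and write $n=n(k,\des_i)$ and $\sigma=\sigma(\des_i)$. Set $Z=\sqrt{n}(\hat{\mu}_i-\mu(\des_i))/\sigma\sim N(0,1)$ and $R=\hat{\sigma}_i/\sigma$, where $R^2\sim\chi^2_{n-1}/(n-1)$. Let $a=\sqrt{n}\,\delta/\sigma$. The statistic equals
$$\frac{|Z-a|-|Z|}{R}\;\geq\;\frac{|a|-2|Z|}{R},$$
by the reverse triangle inequality. By Assumption 2, $|a|\geq \sqrt{m_k}\,|\delta|/\overline{\sigma}=:a_k$, and $a_k\to\infty$.

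\textbf{Step 3: bound the bad events.} Consider the event $\{|Z|\leq a_k/4\}\cap\{R\leq 2\}$. On it the statistic is at least $(a_k/2)/2=a_k/4$, which exceeds $\epsilon$ for all large $k$. So for large $k$, the conditional probability that the statistic is $\leq\epsilon$ is at most
$$\mathbb{P}(|Z|>a_k/4)+\mathbb{P}\left(\chi^2_{n-1}/(n-1)>4\right).$$
The first term depends only on $a_k$ and tends to $0$. For the second, Chebyshev gives the bound $\mathrm{Var}(\chi^2_{n-1}/(n-1))/9=2/(9(n-1))\leq 2/(9(m_k-1))\to0$.

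\textbf{Step 4: uncondition.} Both bounds are uniform in $\des_i$, so integrating over $\pi$ gives the unconditional limit $0$.

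The step I expect to need the most care is Step 1. The Lipschitz-type hypothesis on $n(k,\cdot)$ is what propagates divergence from a single point to a uniform lower bound $m_k$. Without it, $\desCap_i$ could fall in a region where $n$ stays bounded, and the limit could fail. The remaining steps are routine concentration bounds.
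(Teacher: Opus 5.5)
Your proof is correct and follows essentially the same route as the paper. Both arguments propagate divergence of $n(k,\cdot)$ to $\inf_{\vect{x}} n(k,\vect{x})\to\infty$ via the Lipschitz assumption and compactness, then show, uniformly in the design point, that the mean-difference term stays bounded away from zero while $\hat{\sigma}_i\leq 2\sigma_i$ with probability tending to one. Your execution is cleaner: you standardize and use the bound $|Z-a|-|Z|\geq |a|-2|Z|$, a union bound, and Chebyshev, instead of the paper's comparison with $\overline{Y}_k\sim N(0,1/n_k^0)$ and its appeal to independence of $\hat{\mu}_i$ and $\hat{\sigma}_i$. Your version also handles $\delta<0$ explicitly.
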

\begin{proof}
    Recall that $\spa{X}$ is compact, $n(k,\cdot)$ is uniformly monotone and Lipschitz and there exists $\vect{x} \in \spa{X}$ such that $\lim_{k \to \infty}n(k,\vect{x}) =\infty$. These facts imply that $\lim_{k \to \infty}\inf_{x^\prime \in \spa{X}}n(k,\vect{x}^\prime)=\infty$. 

    Let $n^0_k=\inf_{x^\prime \in \spa{X}}n(k,\vect{x}^\prime)$, $\overline{Y}_k\sim N(0,1/n^0_k)$. Since $\sigma_i \leq \overline{\sigma}$, and $n^0_k>3$, for all $\gamma>0$,
    \begin{equation*}
        \begin{split}
         &\mathbb{P}\left ( \frac{|\hat{\mu}_i-\mu(\desCap_i)-\delta|-|\hat{\mu}_i-\mu(\desCap_i)|}{\sigma_i} <\gamma \right )\\
         &=\mathbb{E}\left (\mathbb{P}\left ( \frac{|\hat{\mu}_i-\mu(\des_i)-\delta|-|\hat{\mu}_i-\mu(\des_i)|}{\sigma_i} <\gamma \Big\vert \desCap_i=\des_i \right ) \right )\\
         &\leq \mathbb{E}\left (\mathbb{P}\left ( |\overline{Y}_k/\sigma_i-\delta/\sigma_i|-|\overline{Y}_k/\sigma_i| <\gamma \Big\vert \desCap_i=\des_i \right ) \right )\\
         &\leq \mathbb{P}\left ( |\overline{Y}_k/\underline{\sigma}-\delta/\overline{\sigma}|-|\overline{Y}_k/\underline{\sigma}| <\gamma \right ).
        \end{split}
    \end{equation*}
    Since $\lim_{k \to \infty} |\overline{Y}_k/\underline{\sigma}-\delta/\overline{\sigma}|-|\overline{Y}_k/\underline{\sigma}|\overset{p}{\to} \delta/\overline{\sigma}$,
    $$\lim_{k \to \infty}\mathbb{P}\left ( \frac{|\hat{\mu}_i-\mu(\desCap_i)-\delta|-|\hat{\mu}_i-\mu(\desCap_i)|}{\sigma_i} <\frac{\delta}{2\overline{\sigma}} \right )=0.$$
    Likewise, for $\gamma>1$,
    \begin{equation*}
        \begin{split}
          \mathbb{P}\left ( \frac{\hat{\sigma}_i}{\sigma_i}>\gamma\right )&=\mathbb{E}\left (\mathbb{P}\left ( \frac{\hat{\sigma}^2_i(n(k,\des_i)-1)}{\sigma^2_i}>\gamma^2(n(k,\des_i)-1)\Big\vert \desCap_i=\des_i\right ) \right ) \\
          & \leq \mathbb{P}\left (\frac{\chi^2_{n^0_k-1}}{n^0_k-1}>\gamma^2 \right ).
        \end{split}
    \end{equation*}
    Since $\lim_{k \to \infty}\chi^2_{n^0_k-1}/(n^0_k-1) \overset{p} \to 1$,
    $$\lim_{k \to \infty}\mathbb{P}\left ( \frac{\hat{\sigma}_i}{\sigma_i}>2\right )=0.$$
    Because $\hat{\mu}_i$ and $\hat{\sigma}_i$ are independent, it follows from above that,
    $$\lim_{k \to \infty}\mathbb{P}\left ( \frac{\sqrt{n(k,\desCap_i)}(|\hat{\mu}_i-\mu(\desCap_i)-\delta|-|\hat{\mu}_i-\mu(\desCap_i)|)}{\hat{\sigma}_i}\leq \epsilon \right )=0.$$
\end{proof}

\end{document}